\def\phi{\varphi}
\def\0{\varnothing}
\def\N{\bf N}
\def\ol{\overline}
\newtheorem{The}{Theorem}
\newtheorem{Lem}[The]{Lemma}
\title{ON THE PALINDROMIC DECOMPOSITION OF BINARY WORDS}
\author{Olexandr Ravsky}
\address{Department of Mathematics, Ivan Franko National Lviv University\\
      Universytetska 1, Lviv, Ukraine}
\email{oravsky@mail.ru}
\abstract{ We prove a precise formula for the minimal number $K(n)$ such that
every binary word of length $n$ can be divided into $K(n)$ palindromes.
Also we estimate the average number $\ol K(n)$ of palindromes composing a
random binary word of the length $n.$} \keywords{binary word, palindrome,
measure of symmetry, measure of asymmetry.}
\begin{document}
\maketitle
\section{Introduction}

The present note arose from the following problem proposed at International
Mathematical Tournament of Towns \cite{4}, p.8: {\it Prove that every binary
word of length 60 can be divided into 24 symmetric subwords and that the number
24 cannot be replaced by 14.} A word $w=a_0\dots a_n$ is called {\it symmetric}
if $a_i=a_{n-i}$ for all $i\le n$. For symmetric words we shall use a more
poetic term {\it palindrome}. Let $S$ be the set of nonempty binary words over
the alphabet $\{a,b\}$ and $S^1$ be the set $S$ with added the empty word.
Observe that the set $S^1$ is a semigroup with respect to the operation of
concatenation. The length of a word $w\in S^1$ will be denoted by $l(w)$. In
particular, the empty word has length $0$.

The above tournament task suggests three general problems:{\it

(1) Given a word $w\in S$ find the minimal number $m(w)$ of palindromes whose
product in $S$ is equal to $w$ (thus the number $m(w)$ can be thought as a
measure of asymmetry of $w$);

(2) Given a positive integer $n$ find the number $K(n)=\max\{m(w):l(w)=n\}$
equal to the maximal asymmetry measure of the ``worst'' binary word of length
$n$;

(3) Estimate the average asymmetry measure $\ol K(n)=2^{-n}\sum\{m(w):l(w)=n\}$
of a random binary word of length $n$.}

It should be noted that the first two questions were considered in \cite{5} and
\cite{2} while the last question was suggested to the author by O.Verbitsky.
Observe that the above problems are consistent only for a two-letter alphabet:
for every positive integer $n$ the word $(abc)^n$ in the three-letter alphabet
$\{a,b,c\}$ contains only trivial symmetric subwords.

For small numbers $n$ it turned to be possible to calculate the numbers $K(n)$
by computer:

\begin{table}[ht]
\begin{center}
\begin{tabular}{|c|c|c|c|c|c|c|c|c|c|c|c|c|c|c|c|}
\hline
$n$ & 1 & 2 & 3 & 4 & 5 & 6 & 7 & 8 & 9 & 10 & 11 & 12 & 13 & 14 & 15\\
\hline
$K(n)$ & 1 & 2 & 2 & 2 & 2 & 3 & 3 & 4 & 4 & 4 & 5 & 5 & 5 & 6 & 6\\
\hline
$n$ & 16 & 17 & 18 & 19 & 20 & 21 & 22 & 23 & 24 & 25 & 26 & 27 & 28 & 29 & 30\\
\hline
$K(n)$ & 6 & 6 & 7 & 7 & 8 & 8 & 8 & 8 & 9 & 9 & 10 & 10 & 10 & 10 & 11\\
\hline
\end{tabular}
\end{center}
\end{table}

This data allowed us to suggest and prove a precise formula for $K(n)$:

\begin{The} $K(n)=[\frac n6]+[\frac{n+4}6]+1$ for every
number $n\not=11$ and $K(11)=5$.
\end{The}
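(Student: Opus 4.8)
My plan is to prove the theorem as two matching inequalities---an upper bound $m(w)\le K(n)$ for every $w$ with $l(w)=n$, and a word attaining it---organised around the auxiliary function $f(n)=[\frac n6]+[\frac{n+4}6]+1$. A one-line check of the floors shows that $f$ satisfies the clean recursion $f(n)=f(n-6)+2$ for all $n\ge 7$, with base values $f(1),\dots,f(6)=1,2,2,2,2,3$; the theorem then asserts $K(n)=f(n)$ except that $K(11)=f(11)+1=5$. The structural fact I would use throughout is that a binary word is a palindrome \emph{iff} its sequence of maximal equal-letter runs has odd length and a palindromic run-length profile. This converts everything into statements about the run-length composition of $w$, and in particular shows that the only way to build a long palindromic factor is to have a long, internally symmetric block of runs.

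For the upper bound I would induct on $n$ in steps of $6$ (so the residue $n\bmod 6$ is preserved and the induction terminates at the base cases $n\le 6$). The inductive engine is a local lemma of the form: \emph{every binary word of length $\ge 7$ admits, at one of its two ends, a segment of length exactly $6$ that is a product of at most two palindromes}; granting this, if $w=q\,w'$ (or $w=w'\,q$) with $l(q)=6$ and $m(q)\le 2$, then $m(w)\le 2+m(w')\le 2+f(n-6)=f(n)$. The point of allowing either end is that a ``bad'' end such as $aabbab$ (whose length-$6$ prefix is not a product of two palindromes) is rare, and I would prove the local lemma by a finite case analysis on the leading and trailing runs. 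Crucially the argument must be arranged so that its \emph{only} failure occurs at $n=11$: for $n=11$ one cannot peel a good length-$6$ end-segment without leaving an inadmissible residue, which is exactly the source of the anomalous value $K(11)=5$, and I would dispatch $n=11$ (and the finitely many base lengths) by direct inspection, consistently with the tabulated data.

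For the lower bound I would exhibit, for each admissible $n$, an explicit word $W_n$ with $m(W_n)\ge f(n)$, taken as a length-$n$ prefix of a fixed eventually periodic word whose run-length profile is everywhere locally asymmetric, so that it has no usable long palindromic factors. The naive guess $(aabbab)^{\infty}$ fails---its seams create palindromes like $abba$ and $baab$, and indeed $aabbabaabbab$ splits into the four palindromes $a\,|\,abba\,|\,baab\,|\,bab$---so the real work is to pin down a block for which \emph{every} factor, including those straddling period boundaries, has only short palindromic prefixes and suffixes. Granting such a word, the lower bound follows from a ``short first palindrome'' lemma: in any palindromic decomposition of $W_n$ the initial palindrome has bounded length, after whose deletion one is left (up to at most one extra palindrome) with a member of the same family of length $n-6$, yielding $m(W_n)\ge 2+m(W_{n-6})$ and hence $m(W_n)\ge f(n)$ by induction. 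The exceptional $n=11$ is then handled by producing one specific length-$11$ word and verifying, by the run criterion, that it admits no decomposition into four palindromes.

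The main obstacle is the combinatorial core of both halves rather than any single estimate. On the upper side it is the verification that a good length-$6$ end-segment always exists together with the delicate requirement that the whole scheme breaks \emph{exactly} at $n=11$ and nowhere else (so that the extra unit in $K(11)$ is neither missed nor spuriously propagated to lengths $17,23,\dots$). On the lower side it is the identification of the correct extremal family and the proof that long palindromes genuinely cannot occur across its period boundaries, which is what forces the average palindrome length down to the value $\approx 3$ responsible for $f(n)\approx n/3$. I expect the run-length reformulation to make both case analyses finite and mechanical, but organising them so that the single genuine exception at $n=11$ emerges cleanly is where the care is needed.
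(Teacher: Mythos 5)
Your overall two-sided architecture (induction in steps of $6$ for the upper bound, an explicit near-periodic extremal family for the lower bound) is the same as the paper's, but both halves of your plan have genuine gaps, and the one in the upper bound is fatal as stated. Your inductive engine --- \emph{every binary word of length $\ge 7$ has, at one of its two ends, a length-$6$ segment that is a product of at most two palindromes} --- is false, already at length $7$: for $w=aabbaba$ the length-$6$ prefix $aabbab$ and the length-$6$ suffix $abbaba$ each require three palindromes (neither is a palindrome, and checking all five split points of each shows no factor is a two-palindrome product), while $m(w)=3$ is realized only by cuts such as $aa\cdot bb\cdot aba$ that respect neither end-block; $aabbabaabbab$ shows the same failure at length $12$. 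This is precisely why the paper's upper bound (its Lemma 6) is not a local peeling argument: its induction step splits into the case where some prefix $v_1$ is ``efficient'' ($m(v_1)<l(v_1)/3$, or $m(v_1)\le l(v_1)/3$ with $l(v_1)$ a multiple of $6$), and a residual case where the word is forced (its Lemma 5) into the explicit family $w_{n-1}b,\ w_{n-2}bbaaaba,\ w_{n-3}bbaaabababbaa$ built from $w_n=aabab(bbaaba)^n$, which is then dispatched by two computer-verified combinatorial lemmas; moreover the induction base is \emph{all lengths up to $29$}, verified by computer, not $n\le 6$. That large base is strong evidence that no scheme as local as yours can work, nor confine the anomaly to $n=11$ (indeed, with your lemma one would deduce the false bound $K(11)\le 2+K(5)=4$, and you offer no mechanism preventing the true value $K(11)=5$ from leaking into lengths $17,23,\dots$).

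Your lower bound is the right idea --- it is essentially the paper's --- but the crux is exactly what you leave unconstructed (``granting such a word''). The paper takes $w_n=aabab(bbaaba)^n$ and proves (its Lemma 7) that $(bbaaba)^n$ contains no palindromic factor of length $\ge 5$; this bounds the last palindrome in any decomposition of a prefix $u_n$ and yields a recursion of the form $m(u_n)=\min\{m(u_{n-k_1}),m(u_{n-k_2})\}+1$ over the two admissible short peels (which depend on $n\bmod 6$), whose solution must be tracked through all six residues --- not the single inequality $m(W_n)\ge 2+m(W_{n-6})$ you posit. More importantly, prefixes of one periodic word do \emph{not} suffice: they give the sharp bound only in five of the six residue classes, and for lengths $\equiv 2\pmod 6$ the paper needs a second family $aabab(bbaaba)^nbbaaababb$ (its Lemma 9), with a separate, more delicate argument. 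Without the explicit period, the no-long-palindrome lemma, and the extra family for the deficient residue class, your lower bound remains a plan rather than a proof; your run-length criterion for palindromes is correct but does not by itself produce these objects.
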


The number $n=11$ is exceptional and the word of length 11 destroying the
uniformity is  $w=aababbaabab$. The computer calculation shows that $w$ is a
unique word of length $11$ (up to change $a\leftrightarrow b$ and reading the
word from the right) with $m(w)=5$.

Theorem 1 will be proved by induction whose base uses the computer calculation
of $K(n)$'s for  $n\le 29$. Let us remark that the same values of $K(n)$ for
$n\le 29$ were independently obtained by Aleksandr Spivak \cite{2} which also
suggested a similar formula for $K(n)$.

Theorem 1 shows that the ``worst'' word of length $n$ is very asymmetric: it
cannot be divided into $<n/3$ palindromes. Next, we show that a random binary
word also is far from being symmetric: it cannot be divided into $<n/11$
palindromes. Like in the case of asymmetry measure $K(n)$ of a ``worst'' word
of length $n$, we start with computer calculation of the asymmetry measure $\ol
K(n)$ of a random word of length $n$ for small numbers $n$.

\begin{table}[hl]
\begin{center}
\begin{tabular}{|c|c|c|c|c|c|c|c|c|}
\hline
$ n $ & $\ol K(n)$ & $\ol K(n)/n$ &
$ n $ & $\ol K(n)$ & $\ol K(n)/n$ &
$ n $ & $\ol K(n)$ & $\ol K(n)/n$\\
\hline
$ 1 $ & $1.00 $ & $ 1.0000 $ &
$ 8 $ & $2.33 $ & $ 0.2910 $ &
$ 15$ & $3.36 $ & $ 0.2239 $\\
\hline
$ 2 $ & $1.50 $ & $ 0.7500 $ &
$ 9 $ & $2.46 $ & $ 0.2734 $ &
$ 16$ & $3.50 $ & $ 0.2188 $\\
\hline
$ 3 $ & $1.50 $ & $ 0.5000 $ &
$ 10$ & $2.61 $ & $ 0.2613 $ &
$ 17$ & $3.66 $ & $ 0.2152 $\\
\hline
$ 4 $ & $1.75 $ & $ 0.4375 $ &
$ 11$ & $2.75 $ & $ 0.2502 $ &
$ 18$ & $3.81 $ & $ 0.2114 $\\
\hline
$ 5 $ & $1.75 $ & $ 0.3500 $ &
$ 12$ & $2.91 $ & $ 0.2425 $ &
$ 19$ & $3.96 $ & $ 0.2084 $\\
\hline
$ 6 $ & $2.06 $ & $ 0.3438 $ &
$ 13$ & $3.05 $ & $ 0.2349 $ &
$ 20$ & $4.11 $ & $ 0.2055 $\\
\hline
$ 7 $ & $2.09 $ & $ 0.2991 $ &
$ 14$ & $3.20 $ & $ 0.2285 $ &
$ 21$ & $4.26 $ & $ 0.2030 $\\
\hline
\end{tabular}
\end{center}
\end{table}

This table will help us to estimate the limit $\ol K=
\lim\limits_{n\to\infty}\frac{\ol K(n)}n.$

\begin{The} The limit
$\ol K=\lim\limits_{n\to\infty}\frac{\ol K(n)}n$ exists,
is equal to $\inf\limits_{n\in\N}\frac{\ol K(n)}n$ and can be estimated
as $0.08781\dots<\ol K\le 0.2030\dots.$
\end{The}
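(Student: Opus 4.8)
The plan is to prove Theorem 2 in two independent pieces: first the existence of the limit together with its identification as the infimum, and then the two-sided numerical estimate.

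For the existence and the equality $\ol K=\inf_n \ol K(n)/n$, the natural tool is Fekete's subadditivity lemma. First I would establish that the total sum $T(n)\df\sum\{m(w):l(w)=n\}$ is \emph{subadditive} in the sense $T(m+n)\le 2^{n}T(m)+2^{m}T(n)$. The idea is that any word $w$ of length $m+n$ factors uniquely as $w=uv$ with $l(u)=m$ and $l(v)=n$, and concatenating an optimal palindromic decomposition of $u$ with one of $v$ gives a (generally non-optimal) decomposition of $w$, so $m(w)\le m(u)+m(v)$. Summing over all $2^{m+n}$ words $w$ and regrouping the sum over the $u$-part and the $v$-part yields the inequality above. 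Dividing by $2^{m+n}$ turns it into genuine subadditivity of the averages: $\ol K(m+n)\le \ol K(m)+\ol K(n)$. Fekete's lemma then gives at once that $\lim_n \ol K(n)/n$ exists and equals $\inf_n \ol K(n)/n$. The infimum characterization also immediately supplies the upper bound $\ol K\le \ol K(21)/21=0.2030\dots$ read off from the table, since the infimum is $\le$ any single term.

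The harder half is the lower bound $\ol K>0.08781\dots$, and this is where I expect the main obstacle to lie. The infimum characterization is useless here (it only bounds from above), so I need a genuine counting argument showing that a positive fraction of words \emph{require} many palindromes. The plan is to bound the number of words of length $n$ admitting a decomposition into \emph{few} palindromes, say at most $k$, and show this count is a negligible fraction of $2^n$ once $k<cn$ for the stated constant $c=0.08781\dots$. Concretely, a decomposition into $k$ palindromes is determined by the cut-positions together with the data of the palindromes; since a palindrome of length $\ell$ is determined by its first $\lceil \ell/2\rceil$ letters, a word decomposable into palindromes of lengths $\ell_1,\dots,\ell_k$ summing to $n$ carries at most $2^{\sum \lceil \ell_i/2\rceil}\le 2^{(n+k)/2}$ bits of freedom. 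Summing over the compositions of $n$ into $k$ parts, the number of words with $m(w)\le k$ is at most $\binom{n}{k}2^{(n+k)/2}$ (roughly). The expected value $\ol K(n)=2^{-n}\sum_w m(w)$ is then bounded below by $k\cdot\Pr[m(w)\ge k]$, and choosing $k=\lfloor cn\rfloor$ with $c$ small enough that $\binom{n}{cn}2^{(n+cn)/2}=o(2^n)$ forces $\Pr[m(w)\ge k]\to 1$, whence $\ol K\ge c$.

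The delicate point is pinning down the precise constant $0.08781\dots$, which must come from optimizing the entropy estimate. Writing $c$ for the density $k/n$, the condition for the count $\binom{n}{cn}2^{(n+cn)/2}$ to be exponentially smaller than $2^n$ is, after taking logarithms base $2$ and using the binary entropy function $H$, the inequality $H(c)+\tfrac12(1+c)<1$. The threshold constant is therefore the unique $c^\ast\in(0,1/2)$ solving $H(c^\ast)+\tfrac{1}{2}(1+c^\ast)=1$, equivalently $H(c^\ast)=\tfrac12(1-c^\ast)$, and one checks numerically that $c^\ast=0.08781\dots$. I would need to be careful that the crude per-palindrome bound $\lceil \ell/2\rceil$ is tight enough to yield this value rather than a weaker constant, and to handle the lower-order polynomial factors (the $\binom{n}{k}$ with $k$ ranging up to $c^\ast n$, and the $o(1)$ corrections) so that the final inequality is strict. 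Assembling the two halves then gives the full chain $0.08781\dots<\ol K\le 0.2030\dots$ asserted in the theorem.
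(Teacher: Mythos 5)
Your first half --- the inequality $m(w_1w_2)\le m(w_1)+m(w_2)$ summed over all words to get $\ol K(m+n)\le\ol K(m)+\ol K(n)$, then Fekete's (subadditive) lemma for existence and the infimum characterization, and $\ol K\le\ol K(21)/21=0.2030\dots$ read off the table --- is exactly the paper's argument. For the lower bound you start from the same count as the paper: your bound on words decomposable into $k$ palindromes is precisely the paper's $a_k={n-1\choose k-1}2^{\frac{n+k}2}$ (compositions of $n$ times $2^{\left[\frac{\ell+1}2\right]}$ choices per palindrome). But from there the routes diverge. The paper defines the crossing index $p(n)$ at which $\sum_{k\le p}a_k$ first exceeds $2^n$, applies a rearrangement lemma (Lemma~\ref{kSum}) to minimize $\sum_k kx_k$ subject to $x_k\le a_k$, locates $\theta_n=p(n)/(n-1)$ via Stirling, and obtains $\ol K\ge g(\theta')$ with $g(\theta)=\theta-\frac{\sqrt 2\theta^2(1-\theta)}{\theta^2-4\theta+2}$; the correction factor in $g$ is the price of working exactly at the crossing point, where the set of words of small asymmetry can only be bounded by a constant fraction of $2^n$. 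You instead fix $c$ strictly below the entropy threshold, note that $\sum_{j\le cn}a_j$ is then exponentially smaller than $2^n$ (the $a_j$ grow geometrically with ratio $>\sqrt 2$ for $j<n/2$, so the sum is dominated by its last term), and use $\ol K(n)\ge cn\Pr[m(w)>cn]$. This is correct, simpler (no Lemma~\ref{kSum}, and no need for the bound $K(n)<n/2$ from Theorem 1 that the paper invokes), and in fact stronger, because it yields $\ol K\ge c$ for \emph{every} $c$ below the threshold.

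The one genuine error in your proposal is numerical, and it is instructive. The root of $H(c)+\frac12(1+c)=1$, equivalently $H(c)=\frac12(1-c)$, is $c^*=0.09488\dots$, not $0.08781\dots$; indeed your threshold equation is exactly the paper's equation $f(\theta)=0$, since $f(\theta)=\ln 2\,\bigl(H(\theta)+\frac{\theta-1}2\bigr)$, and the paper computes its root as $\theta'=0.09488\dots$. The constant $0.08781\dots$ appearing in the theorem is $g(\theta')$, i.e., it already incorporates the loss inherent in the paper's method, and it does \emph{not} come from the entropy equation. So if you carry out your plan you will prove $\ol K\ge 0.09488\dots$, which gives the stated bound $0.08781\dots<\ol K$ a fortiori: your proof stands, but the sentence identifying $c^*$ with $0.08781\dots$ must be corrected, and your argument actually establishes a sharper lower bound than the one claimed in the theorem.
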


To get the upper bound for $\ol K$ we use the results of computer calculation
while the lower bound is proved by a subtle analytic argument. From the table
we can expect that the exact value of $\ol K$ is close to $1/5$. It suggests
that an average binary word $w$ can be divided into $5/l(w)$ palindromes with
average length $5$.

\section{Proof of Theorem 1}
The proof of Theorem 1 is divided into eight lemmas. We start from the upper
bound. Let $w_n$ denote the word $aabab(bbaaba)^n$ and put $m_0=2$, $m_1=3$,
$m_2=3$, $m_3=3$, $m_4=4$, $m_5=4.$

\begin{Lem} For every
$n\ge 0$ we have

$m(w_n)\le 2n+m_0.$

$m(w_nb)\le 2n+m_1.$

$m(w_nbb)\le 2n+m_2.$

$m(w_nbba)\le 2n+m_3.$

$m(w_nbbaa)\le 2n+m_4.$

$m(w_nbbaab)\le 2n+m_5.$
\end{Lem}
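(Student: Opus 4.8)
The natural approach is induction on $n$, establishing all six inequalities simultaneously. For the base case $n=0$ I would exhibit explicit palindromic factorizations of the six words $w_0=aabab$, $w_0b$, $w_0bb$, $w_0bba$, $w_0bbaa$, $w_0bbaab$; for instance $aabab=aa\cdot bab$ gives $m(w_0)\le 2=m_0$, while $aababbbaab=a\cdot aba\cdot bb\cdot baab$ gives $m(w_0bbaab)\le 4=m_5$, and similarly $aababbba=aa\cdot b\cdot abbba$ yields $m(w_0bba)\le 3=m_3$. Each of the six is a finite check.

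For the inductive step the key observation is that $w_{n+1}=w_nbbaaba$, so every word to be bounded at level $n+1$ has the form $w_{n+1}X=w_n(bbaabaX)$, where $X$ is the empty word or one of $b,bb,bba,bbaa,bbaab$. The plan is to \emph{resplit} the appended string: for each target suffix $X$ (of offset $m_i$) I choose another allowed suffix $X'$ (of offset $m_{i'}$), necessarily a prefix of $bbaaba$, and write $bbaabaX=X'G$ for some short word $G$. Since palindromic factorizations concatenate, $m(w_{n+1}X)\le m(w_nX')+m(G)$, and the induction hypothesis gives $m(w_nX')\le 2n+m_{i'}$. The required bound $2(n+1)+m_i$ therefore follows the moment the split obeys $m_{i'}+m(G)\le m_i+2$, so the whole step reduces to producing, for each of the six phases, one split with this inequality.

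Concretely I would use $bbaaba=bba\cdot aba$, $\;bbaabab=bba\cdot abab$, $\;bbaababb=bba\cdot ababb$, $\;bbaababba=bbaab\cdot abba$, $\;bbaababbaa=bbaab\cdot abbaa$, and $\;bbaababbaab=bba\cdot ababbaab$, whose tails factor as $aba$, $\;aba\cdot b$, $\;aba\cdot bb$, $\;abba$, $\;abba\cdot a$ and $\;aba\cdot b\cdot baab$ into $1,2,2,1,2,3$ palindromes respectively. Reading off $(m_{i'},m(G))$ one checks $3{+}1$, $3{+}2$, $3{+}2$, $4{+}1$, $4{+}2$, $3{+}3$, so $m_{i'}+m(G)=m_i+2$ with equality in every case; this closes the induction.

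The main obstacle is that a \emph{naive} induction, extending a single fixed factorization of $w_n$ by the new block, does not work: the factorizations one finds end in the palindrome $aba$, and $aba\cdot bbaaba=ababbaaba$ cannot be split into fewer than four palindromes, forcing an increase of three rather than the permitted two. The resolution is exactly the resplitting above, where one falls back to a differently phased source word $w_nX'$ before appending. The real content of the step is thus the bookkeeping guaranteeing a good split for each phase, and this is precisely why all six suffixes, with their staggered offsets $m_0,\dots,m_5$, must be propagated through the induction together.
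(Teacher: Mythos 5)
Your proposal is correct and takes essentially the same approach as the paper: a simultaneous induction on all six phase-shifted words, with the base case $n=0$ checked by explicit palindromic factorizations and the inductive step handled by rewriting $w_{n+1}X=w_nX'G$ for a differently phased suffix $X'$ and a short tail $G$ split into few palindromes. The only cosmetic difference is that the paper routes three of the six cases ($bb$, $bbaa$, $bbaab$) through the level-$(n+1)$ bounds it has just established within the same step, whereas you fall back to level $n$ in all six cases; the arithmetic $m_{i'}+m(G)=m_i+2$ comes out the same either way.
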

\begin{proof}
For $n=0$ the lemma results from the following decompositions:

$w_0=(aa)(bab),$

$w_0b=(a)(aba)(bb),$

$w_0bb=(a)(aba)(bbb),$

$w_0bba=(aa)(b)(abbba),$

$w_0bbaa=(aa)(b)(abbba)(a),$

$w_0bbaab=(a)(aba)(bb)(baab).$

Suppose that we have already proved the lemma for $n=k.$ Then

$m(w_{k+1})=m(w_kbba(aba))\le 2k+3+1=2(k+1)+2$

$m(w_{k+1}b)=m(w_kbbaa(bab))\le 2k+4+1=2(k+1)+3$

$m(w_{k+1}bb)=m(w_kbbaaba(bb))\le 2(k+1)+2+1=2(k+1)+3$

$m(w_{k+1}bba)=m(w_kbbaab(abba))\le 2k+4+1=2(k+1)+3$

$m(w_{k+1}bbaa)=m(w_kbbaababb(aa))\le 2(k+1)+3+1=2(k+1)+4$

$m(w_{k+1}bbaab)=m(w_kbbaabab(baab))\le 2(k+1)+3+1=2(k+1)+4.$
\end{proof}
The following two lemmas are proved by routine computer calculations.

\begin{Lem} Let $u\in S, l(u)=6, w\in\{(bbaaba)^2bu,
(bbaaba)bbaaabau,bbaaabababbaau\}.$ Then one of the following conditions is
satisfied:

$1.$ $(\exists v',w'\in S):w\in v'w'S^1,l(v')<6$ and $m_{l(v')}+m(w')<
(5+l(v')+l(w'))/3.$

$2.$ $(\exists v',w'\in S):w\in v'w'S^1,l(v')<6,$ $m_{l(v')}+m(w')\le
(5+l(v')+l(w'))/3$ and $l(v')+l(w')+5$ is a multiple of $6.$

$3.$ $w\in\{(bbaaba)^3b,(bbaaba)^2bbaaaba,(bbaaba)bbaaabababbaa\}.$
\end{Lem}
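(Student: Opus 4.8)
The plan is to regard Lemma 5 as a finite verification, since $u$ ranges over the $2^6=64$ binary words of length $6$ and the prefix of $w$ comes from a three-element set, giving $3\cdot 64=192$ candidate words $w$ (pairwise distinct, as the three thirteen-letter prefixes differ), each of length exactly $19$. First I would locate the exceptional words of condition $3$ inside this list by a direct letter-by-letter comparison: one checks that $(bbaaba)^2bu$ equals $(bbaaba)^3b$ exactly when $u=baabab$ and equals $(bbaaba)^2bbaaaba$ exactly when $u=baaaba$, that $(bbaaba)bbaaabau$ equals $(bbaaba)bbaaabababbaa$ exactly when $u=babbaa$, and that no choice of $u$ lets the third prefix $bbaaabababbaau$ coincide with any word of condition $3$ (it already begins with $bbaaaba$, which disagrees with $bbaaba$ in its fifth letter). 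This isolates the three words of condition $3$ and leaves $189$ words for which condition $1$ or condition $2$ must be established.

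For each of these remaining words I would run a bounded search for a pair $(v',w')$. Since $v'\in S$ is a prefix of $w$ with $l(v')<6$, there are only five candidates, the prefixes of $w$ of lengths $1,\dots,5$, and for each the quantity $m_{l(v')}$ is one of the constants $m_1,\dots,m_5$ fixed before Lemma 3. With $v'$ fixed, $w'$ ranges over the (at most $18$) nonempty prefixes of the remaining suffix of $w$. For every candidate $w'$ I would evaluate $m(w')$ by the standard palindromic-factorization recurrence
\[
  m(w')=\min\{\,1+m(w''):w'=p\,w'',\ p\text{ a nonempty palindromic prefix of }w'\,\},
\]
together with the convention that $m$ of the empty word is $0$; this is cheap because $l(w')\le 18$. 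I would then test whether $m_{l(v')}+m(w')<(5+l(v')+l(w'))/3$, which is condition $1$, or whether $m_{l(v')}+m(w')\le(5+l(v')+l(w'))/3$ holds while $6\mid(l(v')+l(w')+5)$, which is condition $2$. A word satisfies the lemma as soon as one split passes one of these tests.

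The substance of the argument is thus a completeness claim about this finite search, and that is where I expect the real difficulty to sit. Two things need care. First, I must confirm that every one of the $189$ non-exceptional words yields at least one admissible split; this is precisely what forces a computer enumeration, since there seems to be no uniform choice of $(v',w')$ that serves all of them at once. Second, I must check that each of the three words of condition $3$ admits no split meeting condition $1$ or condition $2$, so that condition $3$ is exactly the residue of the search and its separate listing is genuinely necessary. The only remaining risk is computational: the recurrence for $m$ must be evaluated correctly on all factors that occur, and both inequalities — including the exact divisibility test modulo $6$ in condition $2$ — must be handled without rounding, since the bounds are tight. Once the enumeration is organised in this way, executing it is routine, in agreement with the author's description.
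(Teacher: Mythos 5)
Your proposal is correct and takes essentially the same approach as the paper: the paper disposes of this lemma with the single remark that it is ``proved by routine computer calculations,'' and your finite enumeration of the $3\cdot 2^6=192$ words of length $19$, with $m$ computed exactly by the palindromic-prefix recurrence and conditions 1--2 tested in exact integer arithmetic over the at most $5\times 18$ admissible splits $(v',w')$, is precisely such a verification (your identification of the three exceptional words of condition 3, via $u=baabab$, $u=baaaba$, $u=babbaa$, is also correct). The one superfluous step is checking that the condition-3 words admit no admissible split: since the lemma is a disjunction, this is needed only for the sharpness of the statement, not for its truth.
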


\begin{Lem} Let $u\in S$, $12\le l(u)<18$ and $w\in\{(bbaaba)^2bu,
(bbaaba)bbaaabau,$\linebreak $bbaaabababbaau \}.$
Then one of the following conditions is satisfied:

\item{1.} There exist words $v',w'\in S$ such that $w=v'w',l(v')<5$ and
$m_{l(v')}+m(w')\le [17/2+l(u)/4].$
\item{2.} $w\in\{(bbaaba)^2bbaaababbbaaababba,(bbaaba)^3bbaaabababba\}.$
\end{Lem}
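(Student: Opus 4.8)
The plan is to treat the statement as a finite verification. Since $12\le l(u)<18$, the word $u$ has one of the six lengths $12,13,\dots,17$, and by hypothesis $w$ is formed by prepending to $u$ one of the three fixed words $(bbaaba)^2b$, $(bbaaba)bbaaaba$, $bbaaabababbaa$, each of length $13$. Hence $l(w)=13+l(u)\in\{25,\dots,30\}$ and $w$ ranges over the finite set of at most $3\sum_{k=12}^{17}2^{k}$ words. For each such $w$ one computes the right-hand side $[17/2+l(u)/4]$, which equals $11$ for $l(u)\in\{12,13\}$ and $12$ for $l(u)\in\{14,\dots,17\}$, and then checks conditions $1$ and $2$ directly. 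I would organise the enumeration by the three prefixes and by $l(u)$, and use the symmetries $a\leftrightarrow b$ and word-reversal to cut down the number of genuinely distinct cases.

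The computational core is the evaluation of $m(w')$. I would compute $m$ by the standard recurrence $m(x)=1+\min\{m(y):x=py,\ p\text{ a nonempty palindrome}\}$, with $m$ of the empty word equal to $0$; this is a dynamic program, made efficient by precomputing, for each position, which prefixes of the remaining word are palindromes. For each $w$ I would then run over the prefixes $v'$ of length $l(v')\in\{1,2,3,4\}$, so that $l(v')<5$ and $v'\in S$, write $w=v'w'$, and test whether $m_{l(v')}+m(w')\le[17/2+l(u)/4]$, where $m_1=m_2=m_3=3$ and $m_4=4$ are the constants fixed before Lemma 3. If some split succeeds, condition $1$ holds; the claim to be verified is that the only words for which every short-prefix split fails are the two words listed in condition $2$.

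The proof therefore splits into two assertions, and the second carries the real content. The easy direction is that the two exceptional words $(bbaaba)^2bbaaababbbaaababba$ and $(bbaaba)^3bbaaabababba$, both of length $30$ (the case $l(u)=17$), genuinely fall under condition $2$; these I would certify by hand, computing $m(w')$ for each of their four short-prefix splittings and checking that $m_{l(v')}+m(w')$ exceeds the bound $12$ in every case. The hard part is completeness: showing that no other $w$ in the three families escapes condition $1$. This is the step that forces the exhaustive search, since the two sporadic exceptions preclude a single uniform decomposition recipe of the kind exhibited in Lemma 3. To keep the check trustworthy I would exploit the common $13$-letter prefix to share the dynamic-programming tables across words with the same prefix, reduce by the two symmetries above, and sanity-check the computed palindrome numbers against the tabulated values of $K(n)$ for $25\le n\le 30$. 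The main obstacle is thus not conceptual but one of scale and of certifying exhaustiveness; the only mathematical inputs are the correctness of the recurrence for $m$ and the observation that the hypotheses confine $w$ to a finite, explicitly describable list.
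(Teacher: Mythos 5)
Your proposal is correct and coincides with the paper's own treatment: the paper disposes of this lemma (together with Lemma 2) by the single remark that they are ``proved by routine computer calculations,'' and your exhaustive enumeration of the at most $3\sum_{k=12}^{17}2^k$ words with a dynamic-programming computation of $m$ is precisely such a verification (note also that words satisfying condition 2 need no check at all, since the lemma is a disjunction). One caution: neither the swap $a\leftrightarrow b$ nor word-reversal preserves the three prefix families or the prefix-split condition 1, so these symmetries may only be used to share computations of $m$-values, not to shrink the list of words $w$ that must be examined.
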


\begin{Lem} Let $w\in aS$, $l(w)=6n$, $n\ge 3.$ Then one of
the following conditions is satisfied:

1. $(\exists w'\in S):w\in w'S^1$ and $m(w')<l(w')/3.$

2. $(\exists w'\in S):w\in w'S^1$ and $m(w')\le l(w')/3$ and $l(w')$ is a
multiple of $6.$

3. $w\in\{w_{n-1}b,w_{n-2}bbaaaba,w_{n-3}bbaaabababbaa\}.$
\end{Lem}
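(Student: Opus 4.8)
The plan is to prove the lemma by induction on $n$, reading it as the structural dichotomy that powers the inductive proof of the upper bound: a length-$6n$ word beginning with $a$ either admits an initial segment already decomposable at the target density $1/3$ (conditions 1--2), or it is forced to be one of the three maximally asymmetric extensions of $w_{n-1},w_{n-2},w_{n-3}$ (condition 3). The driver for the small cases will be the two preceding computer-verified lemmas, whose canonical tails $(bbaaba)^2bu$, $(bbaaba)bbaaabau$, $bbaaabababbaau$ are precisely the three branches the extremal pattern may follow; I expect them to furnish the base of the induction once an initial $aabab=w_0$ has been stripped.

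First I would establish a rigidity statement: if $w\in aS$ has no \emph{good prefix} --- no prefix $w'$ with $m(w')<l(w')/3$, and none with $m(w')\le l(w')/3$ and $6\mid l(w')$ --- then $w$ must open with $aabab$ and continue with full periods $bbaaba$, so that $w=aabab(bbaaba)^j t$ with the tail $t$ lying in one of the three canonical forms. This is a finite reading of $w$ letter by letter: any deviation from the pattern exposes a short prefix that decomposes into fewer palindromes than a third of its length, triggering condition 1 (or condition 2 when the running length first hits a multiple of $6$, where equality $m=l/3$ already suffices). What keeps the pattern consistent is the decomposition lemma for $w_n$, which shows each period $bbaaba$ contributes exactly two palindromes per six letters --- the threshold ratio $1/3$ --- so a full period is \emph{neutral} for conditions 1--2 and cannot by itself manufacture a good prefix.

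With the reduction in hand, the base cases $n\in\{3,4,5\}$ are read off the two computer-verified lemmas (with a direct check of the tail $(bbaaba)^2b$ for $n=3$). Writing the relevant prefix as $h=w_0v'w'=aabab\,v'w'$ and using $m(w_0v')\le m_{l(v')}$ from the decomposition lemma, one gets $m(h)\le m_{l(v')}+m(w')$ while $l(h)=5+l(v')+l(w')$; the lemmas' inequality $m_{l(v')}+m(w')<(5+l(v')+l(w'))/3$ is then literally $m(h)<l(h)/3$ (condition 1), its non-strict form with $6\mid 5+l(v')+l(w')$ is condition 2, and their exceptional tails give the three words of condition 3. For the inductive step $n\ge 6$ I would delete one period $bbaaba$ from the structured head, obtaining $w^{-}\in aS$ of length $6(n-1)$, apply the induction hypothesis, and then lift: a good prefix of $w^{-}$ stays good for $w$ because reinserting the period adds $6$ to the length and at most $2$ to $m$, preserving both the inequality and its residue mod $6$; and the three exceptional words at level $n-1$ map to those at level $n$ under $w_{k}b\mapsto w_{k+1}b$, $w_{k}bbaaaba\mapsto w_{k+1}bbaaaba$, $w_{k}bbaaabababbaa\mapsto w_{k+1}bbaaabababbaa$.

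The principal obstacle will be the rigidity step together with keeping conditions 1 and 2 genuinely separate throughout the lifting. Ruling out \emph{every} non-extremal prefix is exactly the combinatorial content that the two computer-verified lemmas discharge for the short tails, and the delicate accounting is modular: one must follow $5+l(v')+l(w')$ modulo $6$ so that the strict inequality and the multiple-of-$6$ case are never conflated, and confirm that reinserting a period leaves both the inequality and its modular type intact --- the latter resting on the nested decompositions of the $w_n$-lemma, which raise $m$ by at most $2$ per spliced period and so must be checked to apply to the prefix at hand. I expect the arithmetic converting the tail bound $(5+l(v')+l(w'))/3$ into the global density $l(h)/3$, rather than any one decisive idea, to be where the argument is most error-prone.
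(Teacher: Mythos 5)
Your accounting for turning the computer-lemma inequalities into conditions 1--2 (bounding $m(w_{k-3}v'w')\le 2(k-3)+m_{l(v')}+m(w')$ against $l=6(k-3)+5+l(v')+l(w')$) is exactly the paper's mechanism, but your induction runs in the opposite direction from the paper's, and the lifting step it requires is false. The paper applies the induction hypothesis to the \emph{prefix} consisting of the first $6k$ letters of $w$: if conditions 1 and 2 fail for $w$, they automatically fail for that prefix, since every prefix of the prefix is a prefix of $w$; the hypothesis then forces the prefix to be one of the three exceptional words, hence $w\in\{w_{k-3}(bbaaba)^2bu,\ w_{k-3}(bbaaba)bbaaabau,\ w_{k-3}bbaaabababbaau\}$ with $l(u)=6$, and the $l(u)=6$ computer lemma together with the $w_n$-decomposition bounds yields condition 3. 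Nothing is ever transported from a shorter word to a longer one. You instead delete a period $bbaaba$ from the middle of $w$, apply the hypothesis to $w^-$, and must then lift a good prefix of $w^-$ back to $w$, justified by the claim that reinserting the period adds at most $2$ to $m$. That claim is false, because palindromes of an optimal decomposition can straddle the insertion point: $m(aababbabaa)=1$ (it is a palindrome), while $m(aababbbaabababaa)\ge 4$ (its only palindromic prefixes are $a,aa$, its only palindromic suffixes are $a$, $aa$, $aabababaa$, and no choice completes to three palindromes), so reinserting one period into $aabab\cdot babaa$ raises $m$ by at least $3$. The $w_n$-decomposition lemma only controls appending single short palindromes at the \emph{end} of the specific words $w_n,\dots,w_nbbaab$; it gives no control over splicing a period into the middle of a prefix, and your proposal supplies no substitute. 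Note also that even forming $w^-$ presupposes the periodic head, i.e., presupposes the very structure being proved.

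Two further gaps. First, your ``rigidity statement'' (no good prefix implies $w=aabab(bbaaba)^jt$ with $t$ one of the three canonical tails) is verbatim the conclusion of the lemma, so establishing it first by a ``letter-by-letter reading'' begs the question; the assertion that every deviation from the pattern exposes a good prefix is precisely the hard combinatorial content, it is not a bounded local check (the witnessing prefix grows with the position of the deviation), and in the paper it is obtained only through the induction combined with the computer-verified lemmas. Second, the base case cannot be ``read off'' the two computer lemmas: they apply only to words that already carry one of the canonical $13$-letter patterns $(bbaaba)^2b$, $(bbaaba)bbaaaba$, $bbaaabababbaa$ after the $w_{k-3}$ head, whereas the base case $n=3$ of this lemma must be verified for \emph{all} $2^{17}$ words of $aS$ of length $18$. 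The paper does this by a separate computer calculation, for which your proposal has no replacement.
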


\begin{proof}
For $n=3$ the lemma can be proved by a computer calculation. Suppose that we
have already proved the lemma for $n=k.$ Consider a word $w$ such that
$l(w)=6(k+1).$ If the conditions 1 or 2 does not hold for the word $w$ then
they fail for the word consisting of the first $6k$ letters of the word $w.$
Hence there exists a word $u\in S$ such that $l(u)=6$ and
$w\in\{w_{k-3}(bbaaba)^2bu,w_{k-3}(bbaaba)bbaaabau,w_{k-3}bbaaabababbaau\}.$
Then Lemmas 2 and 3 imply that the condition 3 is satisfied.
\end{proof}

\begin{Lem} Let $v\in S$, $l(v)=6n+r$, $0\le n$, $0\le r<6$ and $l(v)\not =11.$
Then $m(v)\le 2n+[3/2+r/4].$ If $l(v)=11$ then $m(v)\le 5.$
\end{Lem}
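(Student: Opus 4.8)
The plan is to argue by strong induction on the length $l(v)$. Write $f(l)=2n+[3/2+r/4]$ for the proposed bound, where $l=6n+r$ with $0\le r<6$; the claim is then $m(v)\le f(l(v))$ whenever $l(v)\ne 11$, together with $m(v)\le 5$ when $l(v)=11$. The base of the induction is the block of small lengths already computed by machine (through $l=29$), which in particular disposes of the anomalous length $11$. Because $m$ is unchanged under the letter swap $a\leftrightarrow b$ (and under reversal), I may assume once and for all that $v\in aS$, i.e. that $v$ begins with $a$.

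The engine of the inductive step is a single dichotomy: either $v$ admits an \emph{efficient prefix split} $v=w'u$ with $u\in S^1$, where $m(w')<l(w')/3$, or else $m(w')\le l(w')/3$ with $l(w')$ divisible by $6$; or $v$ is one of the rigid extremal words $w_{n'},\,w_{n'}b,\,w_{n'}bb,\,w_{n'}bba,\,w_{n'}bbaa,\,w_{n'}bbaab$. For $l(v)\equiv 0\pmod 6$ this is precisely Lemma~4, whose alternatives~1 and~2 are the efficient splits and whose alternative~3 lists the three extremal words of length $6n'$. For the other residues I would reduce to this case by applying Lemma~4 to the prefix of $v$ of length $6[l(v)/6]$ and absorbing the trailing block of length $r$ into the tail. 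The only delicate configuration is when that prefix is extremal: then $v$ is a rigid word $w_{n''}$ carrying a short continuation, and there are two outcomes. If the continuation prolongs the repeating block $bbaaba$, then $v$ is itself one of the Lemma~1 words and is handled below; otherwise the deviation creates an efficient split inside a bounded window, which one exposes by a finite check of the kind carried out in Lemmas~2 and~3 (these examine precisely $(bbaaba)^2bu$, $(bbaaba)bbaaabau$, $bbaaabababbaau$ over the relevant ranges of $u$).

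Once the dichotomy is in hand the estimate is routine. In the efficient-split case I would combine $m(v)\le m(w')+m(u)$ with the integer bound $m(w')\le[l(w')/3]$ coming from either form of efficiency, and with the inductive estimate on the strictly shorter word $u$; running through the residues of $l(w')$ and $l(u)$ modulo $6$ shows that the floor term $[3/2+r/4]$ exactly absorbs the additive constant, so the sum never exceeds $f(l(v))$. In the extremal case I would instead quote Lemma~1, whose six suffix bounds $2n'+m_0,\dots,2n'+m_5$ are calibrated so that, as $s$ ranges over the empty word and $b,bb,bba,bbaa,bbaab$, the length $l(w_{n'}s)=6n'+5+l(s)$ runs through all six residue classes and $2n'+m_{l(s)}\le f(l(w_{n'}s))$ holds in each; this is the computation already performed in the proof of Lemma~1.

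The part I expect to be genuinely delicate is not any single estimate but the uniform bookkeeping. First, the dichotomy for lengths $\not\equiv 0\pmod 6$ must be extracted from the length-$6n'$ statement of Lemma~4 together with the finite junction analysis of Lemmas~2 and~3; matching the window sizes to the residues is where the argument could slip. Second, the inductive hypothesis is weaker than $f$ at exactly one length: at $l(u)=11$ one only has $m(u)\le 5=f(11)+1$. I would therefore carry the length-$11$ branch explicitly through every split and verify that, in each configuration in which a tail of length $11$ can arise, the extra unit is still swallowed by the constant and the bound $f(l(v))$ closes (it does, but only just). This single slack is precisely the arithmetic reflection of the fact that $11$ is the one length at which the uniform formula fails.
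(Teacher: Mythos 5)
Your overall strategy coincides with the paper's: strong induction with the computer-verified base $l(v)\le 29$, the normalization $v\in aS$, and a dichotomy between efficient prefix splits and the rigid words produced by the structure lemma for prefixes of length $6n$, with the junctions resolved by the finite computer checks. The genuine gap sits at exactly the point you flag as delicate and then wave through: the length-$11$ tail. You claim that in each configuration where a tail of length $11$ arises ``the extra unit is still swallowed by the constant and the bound $f(l(v))$ closes (it does, but only just).'' In the \emph{non-strict} branch of your dichotomy it does not close. Suppose $v=w'u$ with $l(w')=6t$ and $m(w')=2t$ (so only $m(w')\le l(w')/3$, with no strict inequality to spare), while $l(u)=11$ and $m(u)=5$. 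The split yields only $m(v)\le m(w')+m(u)\le 2t+5$, whereas $l(v)=6(t+1)+5$ and $f(l(v))=2(t+1)+[3/2+5/4]=2t+4$: you are over by one, and no bookkeeping on this split can repair it, since the additive bound really can equal $2t+5$ (take $u=aababbaabab$); one must produce a different decomposition of $v$. In the strict branch the defect is indeed absorbed, since $m(w')\le 2t-1$ gives $m(v)\le 2t+4$, but that is the only branch where it is.

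The paper's case analysis is engineered precisely so that the bad configuration never occurs. In its Case 1 (strict inequality $m(v_1)<l(v_1)/3$) the tail is arbitrary, because the inductive hypothesis is invoked in the uniform form $m(v_2)\le (l(v_2)+4)/3$, which remains valid at length $11$ (as $5\le 15/3$), and the strict inequality supplies the missing unit. In its Case 2 (non-strict bound, $l(v_1)$ a multiple of $6$) the hypothesis $l(v_2)\ge 12$ is imposed, so a non-strict prefix is never paired with a length-$11$ tail. In its Case 3 the split is chosen with $l(v_1)=6(n-2)$ and $l(v_2)=12+r\ge 12$, which simultaneously avoids length-$11$ tails and makes the tail land exactly in the window $12\le l(u)<18$ of the second computer lemma. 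Your proposed reduction --- apply the structure lemma to the prefix of length $6[l(v)/6]$ and absorb the trailing $r<6$ letters --- misses both points: it can pair a non-strict prefix with a length-$11$ complement, and in the rigid case it leaves a junction window of fewer than $6$ letters, which none of the stated computer lemmas covers. The skeleton is right, but the case structure must be rebuilt along the paper's lines (strict splits with arbitrary tails; non-strict splits only with tails of length $\ge 12$; rigid prefix with a tail of length exactly $12+r$) before the induction closes.
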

\begin{proof}
Remark that for $k=6n+r$ we get $(k+1)/3\le 2n+[3/2+r/4]\le (k+4)/3$ and if
$k=11$ then $5\le (k+4)/3.$ Also remark that $x\le 2n+[3/2+r/4]$ for each
positive integer $x<(k+4)/3$. For $l(v)\le 29$ the lemma is proved by the
computer calculation, see the above table. Suppose that we have already proved
the lemma for all words $v$ with $l(v)\le k$, where $k\ge 29.$

Let $l(v)=k+1=6n+r.$ Then $n\ge 5.$ Without loss of generality we may suppose that $v\in aS.$
We consider the following cases:

1. There exist words $v_1\in S,v_2\in S^1$ such that $v=v_1v_2$ and
$m(v_1)<l(v_1)/3.$ Then $m(v)\le m(v_1)+m(v_2)<l(v_1)/3+(l(v_2)+4)/3=
(l(v)+4)/3.$ Therefore $m(v)\le 2n+[3/2+r/4].$

2. There exist words $v_1,v_2\in S$ such that $v=v_1v_2,$ $m(v_1)\le
l(v_1)/3,l(v_2)\ge 12$ and $l(v_1)=6t.$ Then $m(v)\le m(v_1)+m(v_2)\le
2t+2(n-t)+[3/2+r/4]= 2n+[3/2+r/4].$

3. The cases 1 and 2 do not hold. Let $v=v_1v_2$ where
$l(v_1)=6(n-2),l(v_2)=12+r.$ Then Lemma 5 implies that
$v_1\in\{w_{n-3}b,w_{n-4}bbaaaba, w_{n-5}bbaaabababbaa\}.$ If there exist words
$v',w'\in S$ such that $v_1v_2=w_{n-5}v'w',l(v')<5$ and $m_{l(v')}+m(w')\le
[17/2+l(v_2)/4]$ then Lemma 2 implies that $m(v)\le m(w_{n-5}v')+m(w')\le
2(n-5)+m_{l(v')}+m(w')\le 2(n-5)+[17/2+3+r/4]=2n+[3/2+r/4].$ In the opposite
case Lemma 4 applied to the last 25+r letters of the word $w$ implies that
$v_1v_2\in\{w_{n-3}bbaaababbbaaab(abba),w_{n-2}bbaaabab(abba)\}.$ Then $m(v)\le
((l(v)-4)+4)/3+1=l(v)/3+1<(l(v)+4)/3$ and hence $m(v)\le 2n+[3/2+r/4].$
\end{proof}

The following lemmas will be uses to prove the lower bound.

\begin{Lem} For every $n\ge 0$ the word $(bbaaba)^n$ does not contain
a palindrome $p$ with $l(p)\ge 5.$
\end{Lem}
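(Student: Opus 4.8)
The plan is to argue by contradiction: suppose the infinite word $u=(bbaaba)^\infty$ (or a sufficiently long finite power) contains a palindrome $p$ of length $\ge 5$, and derive a local contradiction from the periodic structure. The key observation is that $u$ is purely periodic with period $6$, so any factor of $u$ is determined by its starting position modulo $6$ and its length. A palindrome $p$ occurring at some position must read the same forwards and backwards, and I want to show this forces a coincidence between letters of $bbaaba$ that does not hold.

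First I would fix notation: let the period be $\pi=bbaaba$, indexed as $\pi_0\pi_1\pi_2\pi_3\pi_4\pi_5=b\,b\,a\,a\,b\,a$, and extend to $u_i=\pi_{i\bmod 6}$ for all $i\ge 0$. Suppose $p=u_s u_{s+1}\cdots u_{s+\ell-1}$ is a palindrome with $\ell=l(p)\ge 5$. The palindrome condition says $u_{s+j}=u_{s+\ell-1-j}$ for every $0\le j\le \ell-1$. Since every index reduces mod $6$, this becomes a system of equalities $\pi_{(s+j)\bmod 6}=\pi_{(s+\ell-1-j)\bmod 6}$. The cleanest route is a finite case check: the starting offset $a=s\bmod 6$ takes only $6$ values, and for $\ell=5$ one verifies directly that in each of the six windows of length $5$ in the cyclic word $\pi$, the window is not a palindrome. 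Because any longer palindrome of length $\ge 6$ contains a central palindromic factor of length $5$ (take the middle five letters if $\ell$ is odd, or use that a palindrome of even length $\ge 6$ contains a palindrome of length $5$ symmetric about the same center), ruling out length exactly $5$ suffices to rule out all $\ell\ge 5$.

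Concretely, the six cyclic windows of length $5$ in $bbaaba$ are $bbaab,\ baaba,\ aabab,\ ababb,\ babba,\ abbaa$, and I would simply check that none of these six strings equals its own reversal — for instance $bbaab$ reversed is $baabb\ne bbaab$. This is a short verification of six cases. To handle palindromes crossing the seam between consecutive periods, note that the cyclic windows already account for all six residues of the starting offset, so a factor straddling a period boundary has the same form as one of these windows; the periodicity guarantees there are no additional constraints.

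The main obstacle, such as it is, lies in the reduction step: one must argue carefully that a palindrome of arbitrary length $\ge 5$ forces a palindromic factor of length exactly $5$ about its center, and that this factor is itself a length-$5$ factor of $u$ starting at some residue mod $6$. The cleanest phrasing is: if $p$ centered at position $c$ is a palindrome, then its central factor of length $5$ (positions $c-2,\dots,c+2$ for the appropriate interpretation of the center) is also a palindrome, and it is a factor of $u$, hence matches one of the six windows above — contradiction. Once this reduction is stated precisely, the remainder is the mechanical six-case check, and the lemma follows.
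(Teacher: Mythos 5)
Your six-case check of the length-5 cyclic windows is correct (none of $bbaab$, $baaba$, $aabab$, $ababb$, $babba$, $abbaa$ equals its reversal), and so is the observation that any factor of length at most $6$ of $(bbaaba)^n$ coincides with one of the cyclic windows, determined by the starting position modulo $6$. But your reduction step fails for palindromes of even length, and it cannot be repaired in the way you state it. A palindrome of even length is symmetric about a point lying \emph{between} two letters, so every factor of it that is symmetric about that same center necessarily has even length; there is no ``central factor of length $5$'' under any interpretation of the center. Moreover, it is simply false that an even-length palindrome must contain a length-5 palindrome at all: $abbbba$ is a palindrome of length $6$ whose only factors of length $5$, namely $abbbb$ and $bbbba$, are not palindromes. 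Consequently, ruling out palindromic factors of length exactly $5$ does not rule out palindromic factors of length $6$, and hence does not rule out all lengths $\ge 5$.

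The repair is to treat the even case separately. Deleting the first and last letters of a palindrome leaves a palindrome of length two less, so iterating this, any palindromic factor of length $\ge 5$ yields a central palindromic factor of length exactly $5$ (odd case) or exactly $6$ (even case), and this central factor is again a factor of $(bbaaba)^n$. You must therefore also check the six cyclic windows of length $6$, namely $bbaaba$, $baabab$, $aababb$, $ababba$, $babbaa$, $abbaab$; none of these is a palindrome, which closes the gap. This is in substance exactly what the paper does: it reduces any palindrome of length $\ge 5$ to one of length $5$ or $6$ by stripping outer letter pairs, observes that such a short factor sits inside two consecutive periods, and then verifies by direct inspection that $(bbaaba)^2$ contains no palindromic factor of length $5$ or $6$.
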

\begin{proof}
Put $v=bbaaba.$ If $v^n$ contains a palindrome $p$ with $l(p)\ge 2,$ then $v^n$
also contains a palindrome $p'$ such that $l(p')=l(p)-2.$ Therefore it suffices
to show that $v^n$ does not contain a palindrome $p$ with $l(p)\in\{5,6\}.$
Suppose the converse. Since the length of $p$ does not exceed the length of $v$
then we can find two consecutive subwords $v_1=v_2=v$ of $v^n$ such that $p$ is
a subword of $v_1v_2$. Thus $v^2$ also contains a palindrome $p$ such that
$l(p)\in\{5,6\}.$ The straight check shows the opposite.
\end{proof}

\begin{Lem} Let $n=6t+5+r$, $t\ge 1$, $0\le r<6.$ Suppose that the word $u_n$ consists of the first $n$
letters of the word $w_{t+1}.$ Then $m(u_n)=2t+m_r.$
\end{Lem}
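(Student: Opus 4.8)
The statement has two halves, and only one of them is substantial. The upper bound $m(u_n)\le 2t+m_r$ is free: for $r=0,1,\dots,5$ the word $u_n$ (the length-$(6t+5+r)$ prefix of $w_{t+1}$) is precisely one of $w_t,\,w_tb,\,w_tbb,\,w_tbba,\,w_tbbaa,\,w_tbbaab$, so Lemma~1 applied with $n=t$ gives exactly $m(u_n)\le 2t+m_r$. Thus the entire content of the lemma is the matching lower bound $m(u_n)\ge 2t+m_r$, and I would spend all the effort there.

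For the lower bound I would argue by an amortized surplus. Fix any palindromic factorization $u_n=p_1\cdots p_k$ and set $\sigma=\sum_{i=1}^k\bigl(3-l(p_i)\bigr)=3k-n$. Since $n=6t+5+r$, the desired inequality $k\ge 2t+m_r$ is \emph{equivalent} to $\sigma\ge 3m_r-5-r$, a constant independent of $t$ (its values for $r=0,\dots,5$ are $1,3,2,1,3,2$). So the plan is to prove, for every factorization, that $\sigma\ge 3m_r-5-r$. The factorization produced in Lemma~1 attains equality, so the bound is tight and the lemma follows.

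The engine is Lemma~7. Because $u_n$ is a prefix of $w_{t+1}=aabab(bbaaba)^{t+1}$, every palindrome of the factorization lying in positions $\ge 6$ is a factor of $(bbaaba)^{t+1}$ and hence has length $\le 4$; moreover a short check shows such a length-$4$ factor can begin only at a position $\equiv 0,2\pmod 6$. Consequently at most one $p_i$ can have length $\ge 5$, namely the unique palindrome meeting the $5$-letter prefix $aabab$. The remaining, decisive point is that the length-$4$ palindromes of the periodic part cannot tile it densely: the positional constraint modulo $6$ forces each length-$4$ palindrome to be flanked by short (length-$1$) palindromes, so that every full period of $6$ letters contributes a net amount $\ge 0$ to $\sigma$. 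I would make this precise by induction on $t$: peeling the last block of $6$ letters, Lemma~7 guarantees it is covered by palindromes whose lengths sum with nonnegative surplus, while the base cases (small $t$) are read off the computed table. The fixed prefix $aabab$ (carrying the only possible long palindrome) together with the partial suffix block of length $r$ then pin the additive constant to exactly $3m_r-5-r$.

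The main obstacle is exactly this bulk estimate — excluding long runs tiled by length-$4$ palindromes, which would give density $1/4$ and so beat the target density $1/3$. Lemma~7 by itself only caps lengths at $4$, so the argument must genuinely use \emph{where} length-$4$ palindromes occur and the short palindromes they force beside them; this local, period-$6$ case analysis is where the real work lies. By comparison the endpoint bookkeeping — the single long palindrome permitted in the prefix, and the $r$ leftover letters of the last block — is a finite inspection that merely fixes the constant $3m_r-5-r$.
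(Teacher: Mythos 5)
Your upper-bound half is correct and is indeed free: the prefix of $w_{t+1}$ of length $6t+5+r$ is exactly $w_t,\,w_tb,\,w_tbb,\,w_tbba,\,w_tbbaa$ or $w_tbbaab$, so Lemma 1 gives $m(u_n)\le 2t+m_r$. Your reduction of the lower bound to the surplus inequality $3k-n\ge 3m_r-5-r$ is also correct, as is the remark that at most one piece of a factorization can have length $\ge 5$ (such a piece must contain the last letter of the prefix $aabab$; note it can exist, e.g.\ $abbba$). The fatal problem is the bulk estimate, exactly the step you call ``where the real work lies'': the mechanism you propose for it is false. It is not true that a length-$4$ palindrome in the periodic part must be flanked by length-$1$ palindromes. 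The two palindromic factors of length $4$ in $(bbaaba)^\infty$ are $baab$ and $abba$, and they can be \emph{consecutive} pieces of a factorization: in $(bbaaba)^4$ the factor occupying positions $2$ through $19$ is tiled as $(baab)(abba)(aba)(bb)(aa)(bab)$ --- six palindromic pieces covering three full periods, with no piece of length $1$ at all, and this pattern repeats with period $18$. So length-$4$ pieces incur no local penalty; the only true statement is the weaker one that no admissible cycle of pieces has average length strictly greater than $3$, and proving that requires analyzing which lengths can start at which positions mod $6$ and then a cycle-mean or potential-function argument over the resulting transition graph --- none of which your proposal supplies. Your inductive scheme ``peel the last block of $6$ letters'' is also not well defined, since pieces straddle block boundaries. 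Finally, even granting the density bound, the exact constants $3m_r-5-r$ come out of the boundary terms of that analysis together with the one possible long straddling piece; dismissing this as ``finite inspection'' hides where the constants actually come from.

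It is worth contrasting this with the paper's proof, which sidesteps arbitrary interior tilings altogether. There one looks only at the \emph{last} piece $p$ of an optimal factorization of $u_n$: the lemma on $(bbaaba)^n$ forces $l(p)\le 4$, and a direct inspection of the suffixes of $u_n$ shows that for each residue of $n$ modulo $6$ exactly two of its short suffixes are palindromes (for instance, for $n=6t+5$ only $a$ and $aba$). This yields recurrences of the form $m(u_n)=\min\{m(u_{n-k_1}),m(u_{n-k_2})\}+1$, which, together with the computer-checked values $m(u_8)=3$, $m(u_9)=4$, $m(u_{10})=4$, are solved by induction on $t$, giving the upper and lower bounds simultaneously. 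Controlling palindromic suffixes of six explicit words is a finite check; controlling all factorizations of the periodic part, which is what your route demands, is a genuinely harder combinatorial problem that your proposal leaves open.
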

\begin{proof}
Let $t\ge 1$ and $u_n=u_{n-k}p_k,$ where $p_k$ is a palindrome with $l(p_k)=k.$
Lemma 6 implies that $k\le 4.$ Therefore the following cases are possible:

If $n=6t+5$ then $u_n=w_{t-1}bbaaba.$ Hence $p_k=a$ or $p_k=aba$ and
$m(u_{6t+5})=\min\{m(u_{6t+4}),m(u_{6t+2})\}+1.$

If $n=6t+6$ then $u_n=w_{t-1}bbaabab.$ Hence $p_k=b$ or $p_k=bab$ and
$m(u_{6t+6})=\min\{m(u_{6t+5}),m(u_{6t+3})\}+1.$

If $n=6t+7$ then $u_n=w_{t-1}bbaababb.$ Hence $p_k=b$ or $p_k=bb$ and
$m(u_{6t+7})=\min\{m(u_{6t+6}),m(u_{6t+5})\}+1.$

If $n=6t+8$ then $u_n=w_{t-1}bbaababba.$ Hence $p_k=a$ or $p_k=abba$ and
$m(u_{6t+8})=\min\{m(u_{6t+7}),m(u_{6t+4})\}+1.$

If $n=6t+9$ then $u_n=w_{t-1}bbaababbaa.$ Hence $p_k=a$ or $p_k=aa$ and
$m(u_{6t+9})=\min\{m(u_{6t+8}),m(u_{6t+7})\}+1.$

If $n=6t+10$ then $u_n=w_{t-1}bbaababbaab.$ Hence $p_k=b$ or $p_k=baab$ and
$m(u_{6t+10})=\min\{m(u_{6t+9}),m(u_{6t+6})\}+1.$

The computer calculation shows that $m(u_8)=3$, $m(u_9)=4$, $m(u_{10})=4.$
Therefore $m(u_{11})=4$, $m(u_{12})=5$, $m(u_{13})=5$, $m(u_{14})=5$,
$m(u_{15})=6$, $m(u_{16})=6.$ This proves the lemma for $t=1.$

Suppose that the lemma is already proved for $t=k.$ Then for $t=k+1$ we obtain:

$m(u_{6k+11})=\min\{m(u_{6k+10}),m(u_{6k+8})\}+1=2k+4=2(k+1)+m_0.$

$m(u_{6k+12})=\min\{m(u_{6k+11}),m(u_{6k+9})\}+1=2k+5=2(k+1)+m_1.$

$m(u_{6k+13})=\min\{m(u_{6k+11}),m(u_{6k+12})\}+1=2k+5=2(k+1)+m_2.$

$m(u_{6k+14})=\min\{m(u_{6k+13}),m(u_{6k+10})\}+1=2k+5=2(k+1)+m_3.$

$m(u_{6k+15})=\min\{m(u_{6k+14}),m(u_{6k+13})\}+1=2k+6=2(k+1)+m_4.$

$m(u_{6k+16})=\min\{m(u_{6k+15}),m(u_{6k+12})\}+1=2k+6=2(k+1)+m_5.$
\end{proof}

\begin{Lem} For every number $n\ge 0$ we have
$m(aabab(bbaaba)^nbbaaababb)=2n+6.$
\end{Lem}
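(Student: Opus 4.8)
The plan is to set $W_n=w_nbbaaababb=aabab(bbaaba)^nbbaaababb$, a word of length $6n+14=6(n+2)+2$. Since Theorem~1 predicts $K(6n+14)=2n+6$, the assertion $m(W_n)=2n+6$ says precisely that $W_n$ is a \emph{worst} word of its length; this is exactly why such a word is needed, for the prefixes $u_N$ furnished by Lemma~7 only reach $2n+5$ in the residue class $2\pmod 6$. I would prove the upper bound $m(W_n)\le 2n+6$ by an explicit decomposition and the matching lower bound by the suffix-palindrome recursion already used in the proof of Lemma~7, reducing everything to the exact values supplied by that lemma.

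For the upper bound, split $W_n=(w_nbb)\,(aaa)\,(bab)\,(b)$. The last three factors are palindromes, and Lemma~1 gives $m(w_nbb)\le 2n+m_2=2n+3$; hence $m(W_n)\le 2n+3+3=2n+6$.

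For the exact value I would use the elementary recursion $m(X)=1+\min_p m(Xp^{-1})$, the minimum being taken over all palindromic suffixes $p$ of $X$. The crucial point is that every word occurring in the recursion has the shape $aabab\,(bbaaba)^n\,(\text{short tail})$, so Lemma~6 forbids long palindromes inside the $(bbaaba)^n$ block; a finite inspection of the tail then shows that at each stage the only palindromic suffixes are short. Concretely, the palindromic suffixes of $W_n$ are just $b$ and $bb$, so $m(W_n)=1+\min\{m(A_n),m(B_n)\}$ with $A_n=w_nbbaaabab$ and $B_n=w_nbbaaaba$. Peeling further, each of these descends through the tail words $w_nbbaaab$ and $w_nbbaaa$ onto the prefixes $u_{6n+6}=w_nb$, $u_{6n+7}=w_nbb$, $u_{6n+8}=w_nbba$ and $u_{6n+9}=w_nbbaa$ of $w_{n+1}$. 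By Lemma~7 (with its parameter $t$ equal to our $n$ and $r=1,2,3,4$) these equal $2n+3,2n+3,2n+3,2n+4$ respectively; substituting back up the finite recursion tree yields $m(w_nbbaaa)=2n+4$, then $m(A_n)=m(B_n)=2n+5$, and finally $m(W_n)=2n+6$. Since this uses $t=n\ge 1$ in Lemma~7, the case $n=0$ (the word $aababbbaaababb$ of length $14\le 29$) would be settled separately by the computer calculation, in agreement with the decomposition $(a)(aba)(bbb)(aaa)(bab)(b)$.

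The main obstacle is the bookkeeping of the lower bound: one must guarantee that no long palindromic suffix has been overlooked at any node of the recursion, since a longer suffix could in principle produce a smaller minimum. This is exactly where Lemma~6 is indispensable, as it reduces the a priori unbounded search for palindromic suffixes to a check of the bounded tail, after which only the short palindromes $a,b,aa,bb,aaa,aba,bab,baaab$ can occur. The remaining effort is the routine verification that the finitely many branches close up on prefixes $u_N$ whose residues $r\in\{1,2,3,4\}$ make Lemma~7 applicable with exactly the values needed to force $m(W_n)=2n+6$.
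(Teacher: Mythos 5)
Your proposal is correct, and it reaches the exact value by a genuinely different route than the paper's own proof, so a comparison is worth recording. The paper (after computer-checking $n\le 1$) rests on the same two ingredients you cite --- the lemma that $(bbaaba)^n$ contains no palindrome of length $\ge 5$, and the prefix lemma $m(u_N)=2t+m_r$ (your Lemmas 6 and 7; the paper's text cites them as Lemmas 7 and 8) --- but instead of a recursion it takes an arbitrary minimal decomposition $p_1\cdots p_m$ of $v_n=aabab(bbaaba)^nbbaaababb$, isolates the factor $p_{k+1}$ that crosses position $6n+5$, proves a one-shot claim that any palindrome ending near the end of $v_n$ is short, concludes that $p_1\cdots p_{k+1}$ must be a prefix of length $6n+6,\dots,6n+9$, and then adds the prefix-lemma value of that prefix to the value of the corresponding tail $baaababb$, $aaababb$, $aababb$ or $ababb$, whose values $3,3,3,2$ are computer-checked; every case sums to $2n+6$, giving both bounds at once. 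Your suffix-peeling recursion $m(X)=1+\min_p m(Xp^{-1})$ is exactly the device the paper uses to prove the prefix lemma itself, pushed two letters deeper into the tail: what it buys is that no computer-verified constants are needed beyond the prefix lemma (so $n=1$ is also settled by hand, where the paper computes), and the upper bound falls out of the recursion for free (your splitting $(w_nbb)(aaa)(bab)(b)$ is a harmless redundancy); what it costs is enumerating the palindromic suffixes of five words. One caution on that enumeration: appealing to the block lemma alone is not quite enough, because a palindromic suffix of intermediate length (roughly $10$ to $18$) has its center of symmetry in the tail $bbaa\dots$, outside the $(bbaaba)^n$ block; you must also observe that the reversed tail patterns $bbaba$, $babaa$, $abaaa$ occur nowhere in these words. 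That check does go through --- your list $a,b,aa,bb,aaa,aba,bab,baaab$ of occurring suffix palindromes is exactly right, and the resulting values $m(w_nbbaaa)=2n+4$, $m(A_n)=m(B_n)=2n+5$, $m(W_n)=2n+6$ are correct --- and it is of the same ``straight check'' calibre as the claim the paper itself leaves to inspection, so this is a presentational rather than a mathematical gap.
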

\begin{proof}
For $n\le 1$ the lemma is proved by the computer calculation. Let $n>1.$ Put
$v_n=aabab(bbaaba)^nbbaa(ababb).$ We claim that if $p$ is a palindrome such
that $v_n=v'pv''$ and $l(v'')<5$ then $l(p)\le 5.$ This can be proved by the
straight check taking into account that for a palindrome $p$ whose ``center of
symmetry'' lies in the subword $(bbaaba)^nbb$ of the word $v_n$ we can apply
Lemma 7 to conclude that $l(p)\le 4.$

Let $p_1\dots p_{m(v_n)}$ be a decomposition of the word $v_n,$ where
$p_1,\dots ,p_{m(v_n)}$ are palindromes. Take a number $k$ such that
$l(p_1\dots p_k)\le 6n+5$ and $l(p_1\dots p_{k+1})>6n+5.$ Put $v'=p_1\dots
p_{k+1},$ $v''=p_{k+2}\dots p_{m(v_n)}.$ Since $l(p_{k+1})\le 5$ then one of
the following cases holds:

1. $v''=baaababb.$ Then Lemma 8 implies that $m(v')=2n+m_1$ and the computer
calculation shows that $m(v'')=3.$ Therefore $m(v')+m(v'')=2n+m_1+3=2n+6.$

2. $v''=aaababb.$ Then $m(v')=2n+m_2,$ $m(v'')=3.$ Therefore
$m(v')+m(v'')=2n+m_2+3=2n+6.$

3. $v''=aababb.$ Then $m(v')=2n+m_3,$ $m(v'')=3.$ Therefore
$m(v')+m(v'')=2n+m_3+3=2n+6.$

4. $v''=ababb.$ Then $m(v')=2n+m_4,$ $m(v'')=2.$ Therefore
$m(v')+m(v'')=2n+m_4+2=2n+6.$

Hence $m(v_n)=m(v')+m(v'')=2n+6.$
\end{proof}

To finish the proof of Theorem 1 let us make the following remarks. Let
$t=6n+r$, $n\ge 0$, $0\le r<6$ and $t\not=11.$ It is easy to verify that
$[\frac t6]+[\frac{t+4}6]+1=2n+[\frac 32+\frac r4].$ Lemma 6 implies that
$K(t)\le 2n+[3/2+r/4].$ Lemma 8 implies that if $n\ge 2$ and $r\not=2$ then
$K(t)\ge 2n+[3/2+r/4].$ Lemma 9 yields $K(t)\ge 2n+[3/2+r/4]$ provided $n\ge 2$
and $r=2$. Finally, the computer calculation shows that $K(11)=5$ and
$K(t)=2n+[3/2+r/4]$ provided $n\le 1$ and $t\not=11$.

\section{Proof of the Theorem 2}

We shall use the following {\it Subadditive Lemma} \cite{1}, \S 2.5.

\begin{lemmaw} Let $\{x_n\}$ be a sequence such that $0\le x_{m+n}\le x_m+x_n$
for every positive integer $m,n$. Then $\lim\limits_{n\to\infty}
x_n/n=\inf\limits_{n\in\N} x_n/n.$
\end{lemmaw}

 To apply this lemma, observe that for positive integer $n,m$ we have
\[\ol K(m+n)=2^{-m-n}\sum\{m(w):l(w)=m+n\}=\]
\[2^{-m-n}\sum\{m(w_1w_2):l(w_1)=m,l(w_2)=n\}\le\]
\[2^{-m-n}\sum\{m(w_1)+m(w_2):l(w_1)=m,l(w_2)=n\}=\]
\[2^{-m}\sum\{m(w_1):l(w_1)=m\}+
2^{-n}\sum\{m(w_2):l(w_2)=n\}=\ol K(m)+\ol K(n).\]

Then the subadditive lemma yields the existence of the limit $\ol
K=\lim\limits_{n\to\infty}\ol K(n)/n$ and an upper bound $\ol K\le\frac{\ol
K(21)}{21}=\frac{372487}{7\cdot2^{18}}=0.2030\dots.$

Let $n\ge 9.$ Next, we prove the lower bound for $\ol K.$ Observe that $2^n\ol
K(n)=\sum\limits_{k=1}^{K(n)} kx_k,$ where $x_k=|\{w:l(w)=n, m(w)=k\}|.$ In
order to estimate the sum $\sum\limits_{k=1}^{K(n)} kx_k,$ we shall use the
following

\begin{Lem}\label{kSum} Let $l\ge p$ and $x_1,\dots,x_{l+1},a_1,\dots,a_{p+1}$ be
nonnegative real numbers, $\sum\limits_{k=1}^{l+1}x_k=
\sum\limits_{k=1}^{p+1}a_k$ and $x_k\le a_k$ for all $1\le k\le p$. Then
$\sum\limits_{k=1}^{l+1}kx_k\ge\sum\limits_{k=1}^{p+1}ka_k.$
\end{Lem}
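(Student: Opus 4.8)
The plan is to recast the claim as a single inequality about the difference of the two weight sequences and then to apply summation by parts, arranging matters so that all boundary terms vanish and only a sum of partial sums of a fixed sign remains.

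First I would pad the shorter sequence with zeros: set $a_k=0$ for $p+2\le k\le l+1$. This alters neither $\sum_{k=1}^{p+1}a_k$ nor $\sum_{k=1}^{p+1}ka_k$, so after padding both sums run up to $l+1$, the equal-mass hypothesis reads $\sum_{k=1}^{l+1}x_k=\sum_{k=1}^{l+1}a_k$, and the domination hypothesis remains $x_k\le a_k$ for $1\le k\le p$. Writing $d_k=x_k-a_k$, the goal inequality becomes $\sum_{k=1}^{l+1}kd_k\ge0$, while the equal-mass condition becomes $\sum_{k=1}^{l+1}d_k=0$.

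Next I would introduce the partial sums $D_k=\sum_{j=1}^{k}d_j$, so that $D_0=0$ and $D_{l+1}=0$. Summation by parts yields $\sum_{k=1}^{l+1}kd_k=(l+1)D_{l+1}-\sum_{k=1}^{l}D_k=-\sum_{k=1}^{l}D_k$, using $D_{l+1}=0$. Thus the entire statement reduces to showing that $D_k\le0$ for every $1\le k\le l$, after which $\sum_{k=1}^{l}D_k\le0$ gives $\sum_{k=1}^{l+1}kd_k\ge0$ as required.

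The crux is this sign analysis of the partial sums, split at the index $p$. For $1\le k\le p$ every summand satisfies $d_j=x_j-a_j\le0$ by hypothesis, so $D_k\le0$ at once. For $p+1\le k\le l$ I would argue from the tail instead: by the vanishing total $D_k=-\sum_{j=k+1}^{l+1}d_j$, and for each index $j\ge k+1\ge p+2$ we have $a_j=0$, hence $d_j=x_j\ge0$, so again $D_k\le0$. The only point requiring care is the correct bookkeeping of these two index ranges; in particular one should notice that no assumption on $x_{p+1}$ is needed, because the partial sum $D_{p+1}$ is controlled by the nonnegative upper tail rather than by the constrained lower part. This is where the hypothesis $x_k\le a_k$ being imposed only up to $k=p$ (not $k=p+1$) is seen to be exactly sufficient.
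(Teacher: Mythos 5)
Your proof is correct, and it is organized differently from the paper's. The paper disposes of the lemma in a single chain of identities: since the mass hypothesis gives $a_{p+1}=\sum_{k=1}^{l+1}x_k-\sum_{k=1}^{p}a_k$, subtracting $(p+1)\left(\sum_{k=1}^{l+1}x_k-\sum_{k=1}^{p}a_k\right)$ from the difference of the two weighted sums and regrouping yields $\sum_{k=1}^{l+1}kx_k-\sum_{k=1}^{p+1}ka_k=\sum_{k=p+1}^{l+1}(k-p-1)x_k+\sum_{k=1}^{p}(p+1-k)(a_k-x_k)$, which is termwise nonnegative. You instead pad the $a$'s with zeros, sum by parts, and reduce everything to the pointwise sign condition $D_k\le 0$ on the partial sums of $d_j=x_j-a_j$ --- in effect a first-order stochastic dominance argument, with the ranges $k\le p$ (termwise domination) and $p+1\le k\le l$ (nonnegative tail plus vanishing total) handled separately; all your steps check, including the observation that $D_{l+1}=0$ kills the boundary term. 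The two proofs are close relatives: expanding $-\sum_{k=1}^{l}D_k$, writing $D_k$ as a head sum for $k\le p$ and as minus a tail sum for $k>p$, recovers exactly the paper's two nonnegative sums, so the final certificate of positivity is the same. What your organization buys is structure: the condition $D_k\le 0$ together with Abel summation shows at once that the weights $k$ could be replaced by any nondecreasing sequence, and it isolates cleanly why no comparison of $x_{p+1}$ with $a_{p+1}$ is needed (the partial sum $D_{p+1}$ is controlled by the tail, as you note). What the paper's version buys is brevity: two displayed lines, no padding, no case analysis.
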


\begin{proof} Indeed, $\sum\limits_{k=1}^{l+1}kx_k-\sum\limits_{k=1}^{p+1}ka_k=
\sum\limits_{k=1}^{l+1}kx_k-\sum\limits_{k=1}^{p}ka_k
-(p+1)\left(\sum\limits_{k=1}^{l+1}x_k-\sum\limits_{k=1}^{p}a_k\right)=
\sum\limits_{k=1}^{l+1}(k-p-1)x_k+\sum\limits_{k=1}^{p}
(p+1-k)a_k=\sum\limits_{k=p+1}^{l+1}(k-p-1)x_k+
\sum\limits_{k=1}^{p}(p+1-k)(a_k-x_k)\ge 0.$
\end{proof}

Now we are going to find numbers $a_k$ satisfying the conditions of
Lemma~\ref{kSum}. Let $w$ be a word such that $m(w)=k.$ Then $w=p_1\cdots p_k$
for some palindromes $p_1,\dots,p_k.$ For a fixed decomposition
$n=n_1+\dots+n_k$ as a sum of positive integers there exist
$\prod\limits_{i=1}^k 2^{\left[\frac{n_k+1}2\right]}\le 2^{\frac{n+k}2}$
different products of palindromes $p_1,\dots,p_k$ such that $l(p_i)=n_i$ for
every $i.$ Since there exist $n-1 \choose k-1$ decompositions of $n$ as a sum
of $k$ positive integer components then there exist not more than $a_k={n-1
\choose k-1} 2^{\frac{n+k}2}$ different products of $k$ palindromes. Hence
$x_k\le a_k.$

In fact the estimation $x_k\le a_k$ is too rough and there is a more subtle
estimation: if $w=p_1\dots p_k$ for some palindromes $p_1,\dots,p_k$ and
$k<n/2$ then there exists a palindrome $p_i$ such that $l(p_i)>2.$ Let
$p_i=xp'_ix,x\in\{a,b\}.$ Then there exists a decomposition $w=p_1\dots
p_{i-1}xp'_ixp_{i+1} \dots p_k$ of the word $w$ as a product of $k+2$
palindromes. If $k+2<n/2$ then there exists a decomposition of the word $w$ as
a product of $k+4$ palindromes and so forth. Since $K(n)<\frac{n}2$ for $n\ge
9$ we get $x_k\le x_k+x_{k-2}+x_{k-4}+\dots\le a_k$ for $n\ge 9$ and $k\le
K(n).$

There exists $p=p(n)$ such that $\sum\limits_{k=1}^p a_k\le
\sum\limits_{k=1}^{K(n)}x_k =2^n,$ $\sum\limits_{k=1}^{p+1}a_k> 2^n.$ For $1\le
k\le p$ put
$\delta_k=\frac{a_k}{a_{k+1}}=\frac{(n-k-1)!k!}{\sqrt{2}(k-1)!(n-k)!}=
\frac{k}{\sqrt 2(n-k)}.$ Since the sequence $\delta_k$ strictly increases then
for all $l\le k$ we have $a_l=a_{k+1}\delta_k\delta_{k-1}\cdots\delta_l\le
a_{k+1}\delta_k^{k+1-l}.$ Since $p\le K(n)<\frac n2$ for $n\ge 9$ then
$\delta_k<\frac 1{\sqrt 2}<1$ for every $k$. Therefore
$2^n<\sum\limits_{k=1}^{p+1} a_k\le a_{p+1}(1+\delta_p+\dots+
\delta_p^p)<\frac{a_{p+1}}{1-\delta_p}.$  Hence $a_{p+1}={{n-1} \choose p}
2^{\frac{n+p+1}2}>2^n(1-\delta_p).$
 Since $e^{\frac 1{12m}}\sqrt{2\pi m}\left (\frac me\right)^m>m!>
\sqrt{2\pi m}\left (\frac me\right)^m$ (see 21.4-2 in \cite{6}) for all
positive integer $m$ we obtain the estimation

\[e^{\frac 1{12(n-1)}}\sqrt{2\pi
 (n-1)}\left(\frac {n-1}e\right)^{n-1}2^{\frac{n+p+1}2}>a_{p+1}>2^n(1-\delta_p)>\]
\[\sqrt{2\pi p}\left(\frac pe\right)^p \sqrt{2\pi(n-1-p)}\left(\frac
{n-1-p}e\right)^{n-1-p}2^n (1-\delta_p),\] that implies

\[\frac 1{12(n-1)}+\frac 12\ln {2\pi(n-1)} +
(n-1)(\ln {(n-1)}-1)+\frac{p+1-n}2\ln 2>\]
\[\frac 12\ln{2\pi p}+p(\ln p-1)+\frac 12\ln {2\pi(n-1-p)}+
(n-1-p)(\ln(n-1-p)-1)+\]
\[\ln(1-\delta_p).\]
Let $\theta_n=\frac{p(n)}{n-1}.$ Put $r(n)=\frac 1{12(n-1)}+\frac 12\ln
{2\pi(n-1)}-\frac 12\ln{2\pi p}-\frac 12\ln {2\pi(n-1-p)}-\ln(1-\delta_p)$.
Then $r(n)=o(n-1)$ and
\[(n-1)(\ln {(n-1)}-1)+\frac{(\theta_n-1)(n-1)}2\ln 2+r(n)>\]
\[\theta_n(n-1)(\ln{\theta_n}+\ln{(n-1)} -1)+\]
\[(n-1)(1-\theta_n)
(\ln{(1-\theta_n)} +\ln{(n-1)}-1).\] This implies that
$f(\theta_n)>r(n)/(n-1),$ where $f(\theta)=\frac{\theta-1}2\ln
2-\theta\ln\theta-(1-\theta)\ln{(1-\theta)}$, $f(0)=\lim\limits_{\theta\to+0}
f(\theta)=-\frac{\ln 2}2.$

Let $\ol\theta=\ol{\lim\limits_{n\to\infty}}\theta_n.$ By the continuity of the
map $f$ on $[0;1),$ we get $f(\ol \theta)=
\ol{\lim\limits_{n\to\infty}}f(\theta_n)\ge
\lim\limits_{n\to\infty}r(n)/(n-1)=0.$ Since
$0\le\ol\theta\le\lim\limits_{n\to\infty}\frac{K(n)}n=\frac13$ and
$f'(\theta)=\frac{\ln 2}2-\ln\theta+\ln{(1-\theta)}>0$ for $0<\theta\le\frac
13$ we conclude that $\ol\theta\ge \theta',$ where $\theta'$ is the unique root
of the equation $f(\theta)=0$ on the segment $[0;\frac 13].$ Computer
calculation shows that $\theta'=0.09488\dots.$

Using the inequalities $\sum\limits_{k=1}^{K(n)} x_k\le 2^n$, $x_k\le a_k$ for
$k\le p,$ $a_{p-1}\le\frac{2^n}{1+1/\delta_{p-1}}$, Lemma 11 and the equality
$\delta_{p-1}=\frac{p-1}{\sqrt2(n-p+1)}=
\frac{\theta_n}{\sqrt2(1-\theta_n)}+o(1)$ we obtain

\[\sum\limits_{k=1}^{K(n)} kx_k\ge
\sum\limits_{k=1}^p ka_k+\left(2^n-\sum\limits_{k=1}^p a_k\right)(p+1)=
2^n+ \sum\limits_{k=1}^p (k-1)a_k+\]

\[\left(2^n-\sum\limits_{k=1}^p a_k\right)p\ge
\left(2^n-\sum\limits_{k=1}^{p-1} a_k\right)p\ge
\left(2^n-a_{p-1}\sum\limits_{k=1}^{p-1}\delta_{p-2}^{k-1}\right)p\ge\]

\[\left(2^n-\frac{a_{p-1}}{1-\delta_{p-2}}\right)p\ge
\left(2^n-\frac{\frac{2^n}{1+1/\delta_{p-1}}}{1-\delta_{p-2}}\right)p\ge
\left(2^n-\frac{\frac{2^n}{1+1/\delta_{p-1}}}{1-\delta_{p-1}}\right)p=\]

\[\left(1-\frac{\delta_{p-1}}{1-\delta_{p-1}^2}\right)2^np=
\left(1-\frac{\frac{\theta_n}{\sqrt 2(1-\theta_n)}}
{1-\frac{\theta_n^2}{2(1-\theta_n)^2}}+o(1)\right)2^np=
g(\theta_n)2^nn+o(2^nn),\]
where $g(\theta)=\theta-\frac{\sqrt 2\theta^2(1-\theta)}{\theta^2-4\theta+2}.$
Computer calculation shows that the function $g'(x)$ has two real
roots $x_1=0.313,x_2=5.83.$ Therefore $g(x)$ increases for $0\le x\le x_1$ and
decreases for $x_1\le x\le\frac 13.$ Since $\theta'\le\ol\theta\le\frac 13$ then
$g(\ol\theta)\ge\min{(g(\theta'),g(\frac 13))}=\min{(0.08781\dots,0.199)}=g(\theta').$

Let $\{n_l\}$ be a sequence such that $\theta_{n_l}\to\ol\theta.$ Then
$g(\theta_{n_l})\to g(\ol\theta)$ and therefore
\[\ol K=\lim\limits_{l\to\infty}
\frac{\sum\limits_{k=1}^{K(n_l)} kx_k}{2^{n_l}n_l}
\ge\lim\limits_{l\to\infty} g(\theta_{n_l})
=g(\ol\theta)=0.08781\dots.\]

\section*{Acknowledgements}
Author is very grateful to Taras Banakh for the help in preparation of the
manuscript and valuable remarks, to Oleg Verbitsky for valuable remarks and to
Aleksandr Spivak for the program verification.


\begin{thebibliography}{9}

\bibitem{5}
{\sc I.~Akulich},
A mind is good but five minds are better.
{\it Kvant} {\bf 6} (1998), 11--16, (in Russian).

\bibitem{2}
{\sc A.~Baababov}, A ``Pentium'' is good but a mind is better. {\it Kvant} {\bf
4-5} (1999), 38--42, (in Russian).


\bibitem{1}
{\sc R.\,L. Graham, B.\,L. Rothchild, J.\,H. Spencer},
{\it Ramsey theory}.
John Wiley \& sons, 1980.

\bibitem{4}
{\sc N.\,N. Konstantinov, N.\,B. Vasil'ev, A.\,K. Tolpygo}, {\it Twelve
tournaments}. Informational Center of International Mathematical Tournament of
Towns, Moskow, 1991. (in Russian).

\bibitem{6}
{\sc G.\,A. Korn, T.\,M. Korn}, {\it Mathematical Handbook}. McGraw-Hill, 1968.

\end{thebibliography}
\end{document}